\documentclass[11pt]{amsart}
\usepackage{hyperref}

% Symbols

\newcommand{\R}{\mathbf{R}}

\newcommand{\N}{\mathbf{N}}
\newcommand{\tto}{\Rightarrow}
\newcommand{\eps}{\epsilon}

% Operators

% Relations

% Theorem-like environments
\theoremstyle{plain}
\newtheorem{theorem}{Theorem}%[section]
\newtheorem{proposition}[theorem]{Proposition}
\newtheorem{lemma}[theorem]{Lemma}

\theoremstyle{definition}

\theoremstyle{remark}

% Equation Numbering
%\numberwithin{equation}{section}

% Commands specific to this article

\begin{document}
\title[New simple proofs of theorems of Kolmogorov and Prokhorov]
{New simple proofs of the Kolmogorov extension theorem and Prokhorov's theorem}
\author{Wooyoung Chin}
\maketitle

\begin{abstract}
We provide new simple proofs of the Kolmogorov extension theorem
and Prokhorovs' theorem.
The proof of the Kolmogorov extension theorem is based on the simple
observation that $\R$ and the product measurable space $\{0,1\}^\N$
are Borel isomorphic.
To show Prokhorov's theorem, we observe that we can assume that the
underlying space is $\R^\N$.
Then the proof of Prokhorov's theorem is a straightforward application
of the Kolmogorov extension theorem we just proved.
\end{abstract}

%In this note, we provide a new simple proof of each of the following
%important measure-theoretic theorems in probability theory.

The Kolmogorov extension theorem (Theorem \ref{thm:kolmogorov_extension})
is useful when showing the existence of a stochastic process whose
finite-dimensional distributions are known.
By a \emph{finite dimensional distribution} of a process $(X_t)_{t\in T}$,
we mean the joint density of $X_{t_1},\ldots,X_{t_n}$ for some
$n \in \N$ and $t_1,\ldots,t_n \in T$.
For example, one can construct a Brownian motion \cite[Section 37]{Bil12}
or a Markov chain \cite[Section 5.2]{Dur19} using the Kolmogorov
extension theorem.

In this note, we provide a proof of the Kolmogorov extension theorem
based on the simple, but perhaps not widely known observation that
$\R$ and the product measurable space $2^\N$ are Borel isomorphic.
(We denote by $2$ the discrete space $\{0,1\}$.)
By a \emph{Borel isomorphism} we mean a measurable bijection whose inverse is
also measurable.

%%% Use later
%Note that the product $\sigma$-field on $2^\N$ coincides with the
%Borel $\sigma$-field on $2^\N$ generated by the product topology on $2^\N$.
%The following is the version of the Kolmogorov extension theorem we prove.

%Since any Borel measurable subset of $\R^T$ depends only on countably many
%coordinates, it turns out that we can only consider the case $T = \N$.
%For example, if $(B_t)_{t\in[0,\infty)}$ is a Brownian motion,
%we know the joint distribution of $B_{t_1},\ldots,B_{t_n}$ ($t_1<\cdots<t_n$)
%since $B_{t_1},B_{t_2}-B_{t_1},\ldots,B_{t_n}-B_{t_{n-1}}$ are
%independent normal random variables with mean $0$ and variances
%$t_1,t_2-t_1,\ldots,t_n-t_{n-1}$.
%We can apply the Kolmogorov extension theorem to show the existence of
%a process $(B_t)_{t\in[0,\infty)}$ with the properties (i) $B_t=0$ a.s.,
%(ii) if $t_1<\cdots<t_n$, then $X_{t_1},X_{t_2}-X_{t_1},\ldots,
%X_{t_n}-X_{t_{n-1}}$

\begin{theorem}[Kolmogorov extension theorem] \label{thm:kolmogorov_extension}
For each $n \in \N$, let $\mu_n$ be a Borel probability measure on $\R^n$.
Assume that $\mu_1,\mu_2,\ldots$ are \emph{consistent} in the sense that
$\mu_{n+1}(A \times \R) = \mu_n(A)$ for any $n \in \N$ and any Borel set
$A \subset \R^n$.
Then there exists a Borel probability measure $\mu$ on $\R^\N$ satisfying
$\mu(A \times \R^\N) = \mu_n(A)$ for any $n \in \N$ and any Borel set
$A \subset \R^n$.
\end{theorem}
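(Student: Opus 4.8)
The plan is to transport the entire problem to a countable product of two-point spaces, where it can be settled by a soft compactness argument, and then to transport the resulting measure back to $\R^\N$. Fix a Borel isomorphism $\phi\colon\R\to 2^\N$, which exists by the observation recalled above. Applying $\phi$ in each coordinate gives a Borel isomorphism $\Phi\colon\R^\N\to(2^\N)^\N$, and under the natural bijection $(2^\N)^\N\cong 2^{\N\times\N}$ the product $\sigma$-algebras correspond, since a countable product of product spaces carries the full product $\sigma$-algebra. Write $X=2^{\N\times\N}$ and, for $n\in\N$, let $q_n\colon X\to 2^{\{1,\dots,n\}\times\N}$ be the projection onto the first $n$ ``rows'', so that $q_n\circ\Phi=\Phi_n\circ p_n$, where $p_n\colon\R^\N\to\R^n$ is the usual projection and $\Phi_n\colon\R^n\to 2^{\{1,\dots,n\}\times\N}$ is $\phi$ applied coordinatewise (again a Borel isomorphism). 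Set $\nu_n=(\Phi_n)_*\mu_n$; the consistency of $(\mu_n)$ translates into the statement that $\nu_n$ is the image of $\nu_{n+1}$ under the map forgetting the last row. It therefore suffices to construct a Borel probability measure $\nu$ on $X$ with $(q_n)_*\nu=\nu_n$ for every $n$, because then $\mu=(\Phi^{-1})_*\nu$ is a Borel probability measure on $\R^\N$ with $(p_n)_*\mu=\mu_n$, which is the assertion of the theorem.

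To construct $\nu$ I would use Carath\'eodory's extension theorem. For $n\in\N$ let $\cB_n$ be the sub-$\sigma$-algebra of the Borel $\sigma$-algebra of $X$ generated by the coordinates in the first $n$ rows; equivalently, $\cB_n$ consists of the sets $q_n^{-1}(B)$ with $B$ a Borel subset of $2^{\{1,\dots,n\}\times\N}$. These $\sigma$-algebras increase, so $\cA=\bigcup_n\cB_n$ is an algebra, and $\sigma(\cA)$ is the Borel $\sigma$-algebra of $X$. Define $\nu$ on $\cA$ by $\nu(q_n^{-1}(B))=\nu_n(B)$; this is well defined on the overlaps and finitely additive precisely because the $\nu_n$ form a consistent family. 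By Carath\'eodory's theorem, to finish it is enough to check that $\nu$ is continuous at $\emptyset$ along $\cA$, that is, that $\nu(B_k)\to 0$ whenever $B_k\in\cA$ decrease to $\emptyset$.

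This continuity is the heart of the matter, and the single place where the compactness gained by passing from $\R$ to $2^\N$ is used. Suppose $B_k\in\cA$ decrease with $\nu(B_k)\ge\eps>0$ for all $k$; I must exhibit a point of $\bigcap_k B_k$. Write $B_k=q_{n_k}^{-1}(B_k')$. Since $2^{\{1,\dots,n_k\}\times\N}$ is a compact metric space, the Borel measure $\nu_{n_k}$ is inner regular by compact sets, so there is a compact $C_k'\subseteq B_k'$ with $\nu_{n_k}(B_k'\setminus C_k')<\eps 2^{-k-1}$; then $C_k=q_{n_k}^{-1}(C_k')$ is a closed subset of the compact space $X=2^{\N\times\N}$, hence compact, it lies in $\cA$, and $\nu(B_k\setminus C_k)<\eps 2^{-k-1}$. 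The finite intersections $\widetilde C_k=C_1\cap\dots\cap C_k$ form a decreasing sequence of compact subsets of $B_k$ with $\nu(B_k\setminus\widetilde C_k)<\eps/2$, so $\nu(\widetilde C_k)>\eps/2>0$ and in particular $\widetilde C_k\neq\emptyset$; a decreasing sequence of nonempty compact sets then has $\bigcap_k\widetilde C_k\neq\emptyset$, and this intersection is contained in $\bigcap_k B_k$. With the continuity in hand, Carath\'eodory's theorem yields $\nu$, and pulling back through $\Phi$ completes the proof. I expect this compactness step to be the main obstacle: it is the only point where compactness of $2^\N$ and inner regularity of Borel measures on compact metric spaces are genuinely needed, while everything else reduces to bookkeeping about product $\sigma$-algebras and pushforwards.
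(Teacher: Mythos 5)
Your proof is correct, and its opening move---replacing $\R$ by $2^\N$ via a Borel isomorphism and working on $2^{\N\times\N}$---is exactly the paper's. The difference is in how countable additivity of the premeasure is verified. The paper defines $\mu$ only on the field of \emph{finite} cylinder sets (sets determined by finitely many coordinates of $2^{\N\times\N}$); these are clopen, hence compact, so a cylinder can never be a countably infinite disjoint union of cylinders and countable additivity holds vacuously. After Hahn--Kolmogorov it then needs a $\pi$-$\lambda$ argument to upgrade agreement on cylinders to agreement on all Borel sets $A\subset 2^{\N\times[n]}$. You instead work on the larger algebra $\bigcup_n\cB_n$ of sets determined by the first $n$ rows with arbitrary Borel conditions, and prove continuity at $\emptyset$ by the classical compact-class argument, which requires the (standard but not free) fact that a finite Borel measure on a compact metric space is inner regular by compact sets. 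So your route imports one more ingredient at the key step, where the paper's choice of a smaller field makes the step trivial; in exchange, your extension agrees with the $\nu_n$ on all of $\cB_n$ by construction, so you get the consistency statement for arbitrary Borel sets without the paper's closing $\pi$-$\lambda$ argument. Both are complete proofs; the paper's is slightly more economical in prerequisites, yours is the more traditional Carath\'eodory-via-compact-classes argument transplanted to the compact setting that the Borel isomorphism provides.
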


Prokhorov's theorem (Theorem \ref{thm:prokhorov})
is useful when one wants to show that a certain sequence of random
functions converges in distribution to some limiting random function.
For example, it can be used to show that if $\xi_1,\xi_2,\ldots$ are i.i.d.\
real-valued random variables with mean $0$ and variance $1$, then
the sequence of random functions $X_n\colon[0,1]\to\R$ given by
\[
X_n(t) = \frac{\xi_1 + \cdots + \xi_{\lfloor nt\rfloor}
+ (nt-\lfloor nt\rfloor)\xi_{\lfloor nt\rfloor+1}
}{\sqrt{n}}
\]
converges in distribution to a Brownian motion from time $0$ to $1$.

A sequence $(\mu_n)_{n\in\N}$ of probability measures on a metric space $S$
is said to be \emph{tight} if for each $\eps>0$ there is a compact $K\subset S$
such that $\mu_n(K) \ge 1-\eps$ for all $n\in\N$.
In the second half of the note, we will prove the following theorem.
The space $S$ we have in mind is a function space,
such as the space of all continuous real-valued functions on $[0,1]$.

\begin{theorem}[Prokhorov's theorem] \label{thm:prokhorov}
Any tight sequence of probability measures on a separable metric space $S$
has a weakly convergent subsequence.
\end{theorem}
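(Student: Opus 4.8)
\emph{Reduction to a subspace of $\R^\N$.} The idea is to realize $S$ inside $\R^\N$, select convergent finite-dimensional marginals of the $\mu_n$, and glue their limits by Theorem \ref{thm:kolmogorov_extension}. I would fix a countable dense set $(s_k)_{k\in\N}$ in $S$ and replace $d$ by the topologically equivalent bounded metric $d/(1+d)\le 1$; then $\iota(x)=\bigl(d(x,s_k)/(1+d(x,s_k))\bigr)_{k\in\N}$ is a homeomorphism of $S$ onto a subset $S'$ of the Hilbert cube $Q:=[0,1]^\N\subseteq\R^\N$. Set $\tilde\mu_n:=\iota_*\mu_n$; these are Borel probability measures on $\R^\N$ concentrated on $S'\subseteq Q$, and $(\tilde\mu_n)$ is still tight, since $\iota$ sends each compact set witnessing tightness of $(\mu_n)$ in $S$ to a compact subset of $S'$. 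I will also use an elementary \emph{restriction lemma}: if a sequence of Borel probability measures on a metric space $X$, together with a candidate limit, are all concentrated on a common Borel set $B\subseteq X$, then weak convergence in $X$ is equivalent to weak convergence in the subspace $B$ — by the portmanteau theorem both mean that, for every relatively closed $G\subseteq B$, the $\limsup$ of the measures of $G$ is at most the limit measure of $G$. Thus it suffices to produce a subsequence of $(\tilde\mu_n)$ converging weakly on $\R^\N$ to a measure concentrated on a Borel subset of $S'$; transporting that conclusion through $\iota^{-1}$ and invoking the restriction lemma then yields a weakly convergent subsequence on $S$.

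\emph{Constructing the limit.} Let $\pi_k\colon\R^\N\to\R^k$ project onto the first $k$ coordinates. Since a compact set witnessing tightness of $(\tilde\mu_n)$ has compact image under $\pi_k$, and every $\tilde\mu_n$ lives on $Q$, each marginal sequence $\bigl((\pi_k)_*\tilde\mu_n\bigr)_n$ is a sequence of Borel probability measures on the compact metric space $[0,1]^k$, and hence has a weakly convergent subsequence (by the classical fact — Helly's selection theorem, or a diagonal argument over a countable dense subset of $C([0,1]^k)$ together with the Riesz representation theorem — that a sequence of Borel probability measures on a compact metric space has a weakly convergent subsequence). Diagonalizing over $k\in\N$, I pass to one subsequence, still denoted $(\tilde\mu_n)$, along which $(\pi_k)_*\tilde\mu_n$ converges weakly to a Borel probability measure $\nu_k$ on $[0,1]^k$ for every $k$. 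The coordinate-dropping maps $\R^{k+1}\to\R^k$ are continuous and compose with $\pi_{k+1}$ to give $\pi_k$, so passing to the limit yields $\nu_{k+1}(A\times\R)=\nu_k(A)$ for every Borel $A\subseteq\R^k$; that is, $(\nu_k)$ is consistent. Theorem \ref{thm:kolmogorov_extension} then produces a Borel probability measure $\mu$ on $\R^\N$ with $\mu(A\times\R^\N)=\nu_k(A)$, equivalently $(\pi_k)_*\mu=\nu_k$, for all $k$; since each $\nu_k$ is concentrated on $[0,1]^k$, the one-dimensional marginals of $\mu$ are concentrated on $[0,1]$, so $\mu$ is concentrated on $Q$.

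\emph{Weak convergence $\tilde\mu_n\to\mu$ on $\R^\N$.} This is the main technical step, though compactness of $Q$ makes it short. Since $\tilde\mu_n$ and $\mu$ all live on the compact metric space $Q$, it suffices to prove $\int f\,d\tilde\mu_n\to\int f\,d\mu$ for every $f\in C(Q)$. The coordinate functions $x\mapsto x_k$ separate points of $Q$, so the unital algebra they generate — polynomials in finitely many coordinates — is dense in $C(Q)$ by the Stone--Weierstrass theorem; hence every $f\in C(Q)$ is a uniform limit of functions of the form $g\circ\pi_{k_0}$ with $k_0\in\N$ and $g\in C([0,1]^{k_0})$. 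For such a cylinder function, $\int g\circ\pi_{k_0}\,d\tilde\mu_n=\int g\,d(\pi_{k_0})_*\tilde\mu_n\to\int g\,d\nu_{k_0}=\int g\circ\pi_{k_0}\,d\mu$ by weak convergence of the $k_0$-th marginals; since the approximation is uniform, $\int f\,d\tilde\mu_n\to\int f\,d\mu$. Thus $\tilde\mu_n\to\mu$ weakly on $\R^\N$. Note that tightness has not yet been used in this step — it enters decisively only now, to force $\mu$ back into $S'$.

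\emph{Pinning down the support and concluding.} For each $l$, tightness with $\eps=1/l$ provides a compact $\hat K_l\subseteq S$ with $\mu_n(\hat K_l)\ge 1-1/l$, hence $\tilde\mu_n\bigl(\iota(\hat K_l)\bigr)\ge 1-1/l$, for all $n$; and $\iota(\hat K_l)$ is compact, hence closed, in $\R^\N$. Applying the portmanteau theorem to $\tilde\mu_n\to\mu$ gives $\mu\bigl(\iota(\hat K_l)\bigr)\ge 1-1/l$, so $\mu$ — like every $\tilde\mu_n$ — is concentrated on the $\sigma$-compact Borel set $L:=\bigcup_l\iota(\hat K_l)\subseteq S'$, on which $\iota^{-1}$ restricts to a homeomorphism onto the Borel set $\bigcup_l\hat K_l\subseteq S$. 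Pushing $\mu$ and the $\tilde\mu_n$ forward by this homeomorphism recovers each $\mu_n$ and turns $\mu$ into a Borel probability measure $\mu_\infty$ on $S$; applying the restriction lemma first to descend from $\R^\N$ to the subspace $L$, and then to pass from $\bigcup_l\hat K_l$ up to $S$, I conclude that $(\mu_n)$ converges weakly to $\mu_\infty$ along the chosen subsequence. The only ingredient here beyond Theorem \ref{thm:kolmogorov_extension} and routine facts about the product topology on $\R^\N$ is the selection of a weakly convergent subsequence of probability measures on a compact metric space, which is classical and elementary.
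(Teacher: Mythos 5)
Your proof is correct and follows the same overall strategy as the paper: embed $S$ homeomorphically in $\R^\N$, extract weakly convergent finite-dimensional marginals by diagonalization, glue the limits into a measure on $\R^\N$ with Theorem \ref{thm:kolmogorov_extension}, and then transfer the conclusion back to $S$ using tightness and the portmanteau theorem (your ``restriction lemma'' plays exactly the role of the paper's Lemma \ref{lem:reduction}, and your $\sigma$-compact set $L$ is the paper's set $E$). The one step where you genuinely diverge is the proof that the chosen subsequence converges weakly to the glued measure $\mu$: you exploit the fact that all the pushed-forward measures are concentrated on the compact Hilbert cube $Q=[0,1]^\N$ and apply Stone--Weierstrass to reduce to cylinder functions, whereas the paper proves the $\R^\N$ case for an arbitrary tight sequence (Theorem \ref{thm:prokhorov_sequence}) by verifying the $\liminf$ inequality for open sets directly, using that every open subset of $\R^\N$ is a countable union of open cylinder sets. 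Your route buys a very short convergence argument at the price of working only with measures supported on $Q$ and of invoking Stone--Weierstrass; the paper's route yields the slightly more general intermediate statement about $\R^\N$ itself and stays entirely within the portmanteau toolkit. Both are complete; just note that where you offer ``Helly's selection theorem, or \dots the Riesz representation theorem'' for the compact finite-dimensional selection step, you should settle on the Helly/distribution-function argument if you want to preserve the paper's stated aim of avoiding the Riesz--Markov--Kakutani theorem.
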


%The Kolmogorov extension theorem for possibly uncountable products of
%Borel spaces can be easily derived from Theorem \ref{thm:kolmogorov_extension},
%but we omit the derivation.
%Our proof of Theorem \ref{thm:kolmogorov_extension} is based on the
%observation that $\R$ is isomorphic to the Cantor space $2^\N$
%(where $2$ denotes the discrete space $\{0,1\}$ as measurable spaces).

The following special case \cite[Theorem 29.3]{Bil12}
of Theorem \ref{thm:prokhorov} can be easily proved by a diagonalization argument
applied to (cumulative) distribution functions.

\begin{theorem} \label{thm:prokhorov_euclidean}
Any tight sequence of probability measures on $\R^n$ ($n \in \N$)
has a weakly convergent subsequence.
\end{theorem}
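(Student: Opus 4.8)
The plan is to run the classical Helly-type selection argument on the multidimensional cumulative distribution functions: extract a subsequence along which they converge on a countable dense set, regularize to obtain a candidate limit distribution function $F$, and use tightness to guarantee that $F$ is again the distribution function of a \emph{probability} measure. Throughout, write $F_k(x)=\mu_k\bigl((-\infty,x_1]\times\cdots\times(-\infty,x_n]\bigr)$ for $x=(x_1,\ldots,x_n)\in\R^n$. First I would enumerate $\Q^n$ as $q^{(1)},q^{(2)},\ldots$ and, by a diagonal argument, pass to a subsequence (still denoted $(\mu_k)$) along which $F_k(q^{(j)})$ converges for every $j$, say to a value $G(q^{(j)})$. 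The function $G\colon\Q^n\to[0,1]$ inherits from the $F_k$ the properties of being coordinatewise nondecreasing and having nonnegative rectangle increments, since the inclusion--exclusion sum over the $2^n$ corners of any box with rational vertices is a limit of the corresponding nonnegative sums for the $F_k$. I then define $F\colon\R^n\to[0,1]$ by $F(x)=\inf\{\,G(q): q\in\Q^n,\ q_i>x_i\text{ for all }i\,\}$, and a routine check shows that $F$ is right-continuous, coordinatewise nondecreasing, and still has nonnegative rectangle increments.

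Next I would use tightness to show that $F$ is the distribution function of a Borel probability measure on $\R^n$; concretely, that no mass escapes to infinity. Given $\eps>0$, choose $M$ with $\mu_k\bigl([-M,M]^n\bigr)\ge 1-\eps$ for all $k$. Evaluating the $F_k$ at rational points just outside $[-M,M]^n$ and passing to the limit along the subsequence shows that $F(x)\ge 1-\eps$ once all coordinates $x_i$ are sufficiently large, and that $F(x)\le\eps$ once some coordinate is sufficiently negative. Hence $F(x)\to1$ as $\min_i x_i\to\infty$ and $F(x)\to0$ as $\min_i x_i\to-\infty$. By the standard Lebesgue--Stieltjes construction, a function with these four properties (right-continuity, monotonicity, nonnegative increments, the correct limits) is the distribution function of a unique Borel probability measure $\mu$ on $\R^n$.

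It remains to verify $\mu_k\tto\mu$. The set $D$ of continuity points of $F$ has complement contained in a countable union of hyperplanes $\{x_i=c\}$ and is therefore dense. For $x\in D$, sandwiching $F_k(x)$ between $G(q')$ and $G(q'')$ for rational $q'$ slightly below and $q''$ slightly above $x$ forces $F_k(x)\to F(x)$. Writing $\mu_k$ of a half-open box as an inclusion--exclusion of $F_k$ over its $2^n$ corners, it follows that $\mu_k(R)\to\mu(R)$ for every box $R$ with all corners in $D$; since such boxes form a convergence-determining class, $\mu_k\tto\mu$ follows (alternatively, approximate an arbitrary bounded continuous test function uniformly by step functions built from such boxes).

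The main obstacle is the $n$-dimensional bookkeeping rather than any deep idea: one must pin down ``nonnegative rectangle increments'' as the right substitute for monotonicity, check that this property survives both the pointwise limit defining $G$ and the right-continuous regularization defining $F$, and see precisely how tightness promotes the resulting monotone function to a genuine probability distribution function. For $n=1$ all of this reduces to the classical Helly selection theorem; for general $n$ it is careful inclusion--exclusion over the $2^n$ vertices of a box.
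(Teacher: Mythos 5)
Your proof is correct and is exactly the argument the paper has in mind: Theorem \ref{thm:prokhorov_euclidean} is stated there with the remark that it follows ``by a diagonalization argument applied to (cumulative) distribution functions'' (citing Billingsley), which is precisely your Helly-type selection on $\Q^n$, right-continuous regularization, and use of tightness to rule out escape of mass. The multidimensional bookkeeping (nonnegative rectangle increments, continuity points off a countable union of hyperplanes, inclusion--exclusion over box corners) is handled correctly.
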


Unlike Theorem \ref{thm:prokhorov_euclidean}, the existing proofs of
Theorem \ref{thm:prokhorov} in the literature are rather involved.
For example, a proof in \cite[Section 5]{Bil99} (which does not assume the separability of the underlying metric space) uses the Carath\'eodory extension
theorem \cite[Theorem 11.1]{Bil12}, but showing that the constructed outer
measure is indeed an outer measure satisfying some desired properties
is nontrivial.

Our proof of Theorem \ref{thm:prokhorov} uses Theorem
\ref{thm:kolmogorov_extension} to first cover the case $S=\R^\N$.
Although one might consider Theorem \ref{thm:kolmogorov_extension}
as a nontrivial tool, we will see that it is more accessible than the tools
used by other proofs.
We then use the fact that any separable metric space can be topologically
embedded in $\R^\N$, which is a fact also used by some other proofs,
to generalize the result to an arbitrary separable metric space $S$.

\section{The Kolmogorov extension theorem} \label{sec:kolmogorov_extension}

The key to simplifying Theorem \ref{thm:kolmogorov_extension}
is to observe that we can replace $\R$ (equipped with the Borel $\sigma$-field)
with the product space $2^\N$.
Note that the product $\sigma$-field on $2^\N$ coincides with the
Borel $\sigma$-field on $2^\N$ generated by the product topology on $2^\N$.

Let $C$ be the set of all sequences in $2^\N$ which are eventually constant,
and $D$ be the set of all dyadic rationals in $(0,1)$.
Then, the function $f\colon 2^\N\setminus C \to (0,1)\setminus D$
given by
\[
f\bigl((a_n)_{n \in \N}\bigr) = \sum_{n=1}^\infty a_n2^{-n}
\]
is a measurable bijection.
Since $C$ and $D$ are both countably infinite, there is a bijection
$g\colon C\to D$.
Let $h\colon 2^\N \to (0,1)$ be given by $h(x)=f(x)$ if $x\in 2^N\setminus C$
and $h(x)=g(x)$ if $x\in C$.
It is clear that $h$ is a measurable bijection.

\begin{proposition}
%There is a Borel isomorphism between $2^\N$ and $\R$,
%where both sets are equipped with the Borel $\sigma$-field.
The inverse $h^{-1}$ is also measurable.
\end{proposition}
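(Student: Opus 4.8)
The plan is to check measurability of $h^{-1}\colon(0,1)\to 2^\N$ one coordinate at a time. Since the product $\sigma$-field on $2^\N$ is generated by the coordinate projections, a map into $2^\N$ is measurable precisely when each of its coordinate functions is measurable; so it suffices to prove that for every $k\in\N$ the function $x\mapsto\bigl(h^{-1}(x)\bigr)_k$ is Borel measurable on $(0,1)$.

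First I would record that $C$ and $D$ are Borel: every singleton in $2^\N$ is measurable (it is a countable intersection of cylinder sets), so the countable set $C$ is a countable union of measurable sets, and the countable set $D$ is Borel in $(0,1)$. Because $h$ restricts to the bijection $g\colon C\to D$ and to $f$ on $2^\N\setminus C$, the two Borel sets $D$ and $(0,1)\setminus D$ partition $(0,1)$, and on them $h^{-1}$ coincides with $g^{-1}$ and with $f^{-1}$ respectively. Thus it is enough to verify that each coordinate of $h^{-1}$ is measurable on each of these two pieces separately, since a function that is measurable on each set of a finite (or countable) Borel partition is measurable.

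On $D$ there is nothing to do: $D$ is countable, so every subset of $D$ is Borel and the restriction to $D$ of any function is automatically measurable. On $(0,1)\setminus D$, the map $h^{-1}=f^{-1}$ assigns to $x$ its unique binary expansion (the one that is not eventually constant), whose $k$-th coordinate is given explicitly by $\bigl(f^{-1}(x)\bigr)_k=\lfloor 2^kx\rfloor-2\lfloor 2^{k-1}x\rfloor$, a Borel function of $x$ because the floor function is Borel. Gluing the two cases, each coordinate $x\mapsto\bigl(h^{-1}(x)\bigr)_k$ is Borel on all of $(0,1)$, which is exactly what we wanted.

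There is no real obstacle here; the only point requiring a little care is the elementary verification that the displayed formula genuinely reproduces the binary digits of $x$ for $x\notin D$ (equivalently, that $f^{-1}$ is well-defined on $(0,1)\setminus D$), which is already implicit in the definition of $f$. If one prefers to avoid the digit formula altogether, one can instead observe that for each $k$ the set $\{x\in(0,1)\setminus D:\bigl(f^{-1}(x)\bigr)_k=1\}$ is a finite union of dyadic subintervals of $(0,1)$ with the countable set $D$ removed, hence Borel.
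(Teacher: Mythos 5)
Your proof is correct, but it goes by a genuinely different route than the paper's. The paper works with forward images: it observes that measurability of $h^{-1}$ means that $h(A)$ is measurable for every measurable $A\subset 2^\N$, disposes of the countable sets $C$ and $D$, computes explicitly that $f$ maps each cylinder set onto a dyadic interval minus $D$, and then invokes Dynkin's $\pi$-$\lambda$ theorem to pass from cylinders to all measurable subsets of $2^\N\setminus C$ (the collection of sets with measurable image is a $\lambda$-system precisely because $f$ is a bijection, so images commute with complements and countable disjoint unions). You instead treat $h^{-1}$ directly as a map into $2^\N$ and use the fact that the product $\sigma$-field is generated by the coordinate projections, reducing everything to the Borel measurability of each binary-digit function $x\mapsto\lfloor 2^kx\rfloor-2\lfloor 2^{k-1}x\rfloor$ on $(0,1)\setminus D$. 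Your argument is the more elementary of the two: it avoids the $\pi$-$\lambda$ theorem entirely and replaces it with an explicit formula (or, as you note, with the observation that each digit is constant on dyadic subintervals). The paper's argument is slightly more abstract but has the virtue of never needing to verify a digit identity, and its image-of-cylinders computation is the exact mirror of your dyadic-interval description, so the two proofs are in a sense dual to one another. The one point you rightly flag --- that for $x\notin D$ the greedy expansion is the unique binary expansion, is not eventually constant, and is genuinely inverted by $f$ --- is the only place where care is needed, and your treatment of it is adequate.
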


\begin{proof}
%Let $C$ be the set of all sequences in $2^\N$ which are eventually constant,
%and $D$ be the set of all dyadic rationals in $(0,1)$.
%Then, the function $f\colon 2^\N\setminus C \to (0,1)\setminus D$
%given by
%\[
%f\bigl((a_n)_{n \in \N}\bigr) = \sum_{n=1}^\infty a_n2^{-n}
%\]
%is a measurable bijection.
Since any subset of $C$ and $D$ are measurable, it is enough to show
that $f(A)$ is measurable for any measurable $A\subset 2^\N\setminus C$.
For any $N\in\N$ and $(b_n)_{1\le n\le N}\in 2^N$, we have
\begin{multline*}
f\bigl(\{\,(a_n)\in 2^\N \mid a_n=b_n \text{ for } n=1,\ldots,N\,\}\bigr) \\
= \Bigl(\sum_{n=1}^N b_n2^{-n},\sum_{n=1}^N b_n2^{-n} + 2^{-N}\Bigr)\setminus D,
\end{multline*}
where the right side is measurable.
Since the collection of all measurable subsets of $2^\N\setminus C$
whose image under $f$ is measurable is a $\lambda$-system,
Dynkin's $\pi$-$\lambda$ theorem \cite[Theorem 3.2]{Bil12} finishes the proof.
%Since $C$ and $D$ are both countably infinite, there is a bijection
%$g\colon C\to D$.
%If we define $h\colon 2^\N \to (0,1)$ by $h(x)=f(x)$ if $x\in 2^N\setminus C$
%and $h(x)=g(x)$ if $x\in C$, then $h$ is a Borel isomorphism.
%Since $(0,1)$ is homeomorphic to $\R$, the proof is finished.
\end{proof}

\begin{proof}[Proof of Theorem \ref{thm:kolmogorov_extension}]
Let $[n] := \{1,\ldots,n\}$ for each $n \in \N$.
By replacing $\R$ with $2^\N$, we can now assume that $\mu_n$ is
defined on $2^{\N \times [n]}$ instead.
Our goal is to find a $\mu$ defined on $2^{\N \times \N}$
which is consistent with $\mu_1, \mu_2, \ldots$.
For any sets $F,G$ with $F \subset G$, let
$\pi_{G,F} \colon 2^G \to 2^F$ denote the canonical projection.
For a fixed $G$, let us call a set of the form $\pi_{G,F}^{-1}(A)$, where
$F \subset G$ is finite and $A \subset 2^F$, a \emph{cylinder subset} of $2^G$.
For each $n \in \N$ and a cylinder subset $A$ of $2^{\N \times [n]}$,
define
\[
\mu(\pi_{\N\times\N,\N\times[n]}^{-1}(A)) := \mu_n(A).
\]
Then, by the consistency and the finite additivity of the $\mu_n$'s,
$\mu$ is a well-defined and finitely additive set function on the field of
cylinder subsets of $2^{\N \times \N}$.

Since any cylinder subset of $2^{\N \times \N}$ is compact and open,
whenever a cylinder subset is the disjoint union of other cylinder subsets,
the union must be finite.
Thus $\mu$ is obviously (or vacuously) countably additive, and so
the Hahn-Kolmogorov theorem \cite[Theorem 11.2]{Bil12} (sometimes called
the Carath\'eodory extension theorem) implies that $\mu$ extends to
a Borel probability measure, also denoted by $\mu$, on $2^{\N \times \N}$.

It remains to show that $\mu$ is consistent with the $\mu_n$'s.
For any $n \in \N$, the collection of Borel sets $A \subset 2^{\N \times [n]}$
for which
\begin{equation} \label{eqn:kolmogorov_pf}
\mu(\pi_{\N\times\N,\N\times[n]}^{-1}(A)) = \mu_n(A)
\end{equation}
is a $\lambda$-system containing the cylinder subsets of $2^{\N \times [n]}$.
So, by Dynkin's $\pi$-$\lambda$ theorem, we have \eqref{eqn:kolmogorov_pf}
for all Borel sets $A \subset 2^{\N \times [n]}$.
\end{proof}

\section{Prokhorov's theorem} \label{sec:prokhorov}

%A proof of Theorem \ref{thm:prokhorov} found in \cite[Theorem II.6.4]{Par67}
%makes use of the following well-known fact.

%\begin{theorem} \label{thm:urysohn}
%If $S$ is a separable metric space, then $S$ can be homeomorphically
%embedded into $\R^\N$.
%\end{theorem}
%
%\begin{proof}
%Let $\{\,s_n \mid n\in\N\,\}$ be a countable dense subset of $S$.
%Denoting the metric on $S$ by $d$, define $f\colon S\to[0,1]^\N$ by
%\[
%f(x) := (\min\{d(x,s_n),1\})_{n\in\N}.
%\]
%For any distinct $x_1,x_2\in S$, we can find some $n \in \N$ such that
%$d(x_1,s_n) < d(x_2,s_n)$ and $d(x_1,s_n) < 1$, so that
%$\min\{d(x_1,s_n),1\} < \min\{d(x_2,s_n),1\}$.
%This shows that $f$ is injective.
%The continuity of $f$ follows from the continuity of $d\colon S\times S\to\R$.
%To show that $f\colon S\to f(S)$ is open, let $U\subset S$ be open and
%$y \in f(U)$ be given. Let $x \in S$ be such that $y=f(x)$, and find an $n\in\N$
%such that (i) the open ball of radius $
%\end{proof}

%As we see in the following lemma, Theorem \ref{thm:urysohn} allows us to
%prove Theorem \ref{thm:prokhorov} for only $[0,1]^\N$.

The proof of Theorem \ref{thm:prokhorov} in
\cite[Theorem II.6.4, Theorem II.6.7]{Par67}
starts with the case that $S$ is a compact metric space, using the
Riesz-Markov-Kakutani representation theorem \cite[Theorem 2.14]{Rud87}.
Since any separable metric space can be topologically embedded in
the compact metrizable space $[0,1]^\N$ (see \cite[p. 125]{Kel55} or
the proof of \cite[Theorem 34.1]{Mun00}),
%we may assume that $S$
%is a subspace of $[0,1]^\N$ when we prove Theorem \ref{thm:prokhorov}.
%(Note that only the topology, not the specific metric, of the underlying space
%matters if we are concerned with the Borel sets, tightness, and weak
%convergence.
%We should also warn that $S$ might not be measurable in $[0,1]^\N$.)
Theorem \ref{thm:prokhorov} follows from the result of \cite{Par67}
by using the following lemma.
The notation $\tto$ denotes weak convergence.

\begin{lemma} \label{lem:reduction}
Assume that any tight sequence of probability measures on a metric space
$T$ has a weakly convergent subsequence.
If $S \subset T$, then any tight sequence $(\mu_n)_{n\in\N}$
of probability measures on $S$ also has a weakly convergent subsequence.
\end{lemma}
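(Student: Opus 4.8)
The plan is to transport the sequence to $T$ along the inclusion, invoke the hypothesis there, and then check that the limit is concentrated on $S$ so that it can be pulled back to a weak limit on $S$.

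First I would define, for each $n$, the pushforward $\tilde\mu_n$ of $\mu_n$ along the inclusion $\iota\colon S\hookrightarrow T$, i.e.\ $\tilde\mu_n(B)=\mu_n(B\cap S)$ for Borel $B\subset T$; this makes sense because $B\cap S$ is always a Borel subset of $S$. Since the subspace topology is transitive, a compact subset of $S$ is compact, and hence closed, in $T$; so a compact $K\subset S$ with $\mu_n(K)\ge 1-\eps$ for all $n$ also satisfies $\tilde\mu_n(K)=\mu_n(K)\ge 1-\eps$ for all $n$. Thus $(\tilde\mu_n)_{n\in\N}$ is tight on $T$, and by hypothesis there is a subsequence with $\tilde\mu_{n_k}\tto\nu$ for some Borel probability measure $\nu$ on $T$.

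Next I would show that $\nu$ lives on $S$ in a usable sense. For each $m\in\N$ choose a compact $K_m\subset S$ with $\tilde\mu_n(K_m)\ge 1-1/m$ for all $n$. As $K_m$ is closed in $T$, the portmanteau theorem gives $\nu(K_m)\ge\limsup_k\tilde\mu_{n_k}(K_m)\ge 1-1/m$, so the $\sigma$-compact set $E:=\bigcup_m K_m$ satisfies $E\subset S$ and $\nu(E)=1$. Note that $E$ is Borel in $T$, and for any Borel $A\subset S$ the set $A\cap E$ is a Borel subset of $T$ (write $A=\tilde A\cap S$ with $\tilde A$ Borel in $T$; then $A\cap E=\tilde A\cap E$ since $E\subset S$). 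Hence $\nu_S(A):=\nu(A\cap E)$ is a well-defined Borel probability measure on $S$ (countable additivity is inherited from $\nu$, and $\nu_S(S)=\nu(E)=1$).

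Finally I would verify $\mu_{n_k}\tto\nu_S$ on $S$ via the open-set form of the portmanteau theorem. Every open $U\subset S$ has the form $U=V\cap S$ with $V$ open in $T$, and then $\mu_{n_k}(U)=\tilde\mu_{n_k}(V)$ while $\nu_S(U)=\nu(V\cap E)=\nu(V)$; therefore $\liminf_k\mu_{n_k}(U)=\liminf_k\tilde\mu_{n_k}(V)\ge\nu(V)=\nu_S(U)$, which gives $\mu_{n_k}\tto\nu_S$. The only delicate point is that $S$ need not be Borel in $T$, so there is no literal restriction of $\nu$ to $S$; this is exactly where tightness of $(\mu_n)$ enters, as the compact sets $K_m$ are what force $\nu$ onto the $\sigma$-compact, hence genuinely Borel, set $E\subset S$, after which the pull-back and the convergence check are routine.
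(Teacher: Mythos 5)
Your proof is correct and follows essentially the same route as the paper: push the measures forward to $T$, extract a weak limit $\nu$, use the tightness compacta $K_m$ (closed in $T$) and the portmanteau theorem to show $\nu$ concentrates on the $\sigma$-compact set $E=\bigcup_m K_m\subset S$, and pull back via $A\mapsto\nu(A\cap E)$. The only cosmetic difference is that you verify convergence with the open-set form of the portmanteau theorem while the paper uses the closed-set form; your explicit remarks on why $A\cap E$ is Borel in $T$ and why $S$ itself need not be Borel are a welcome addition.
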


\begin{proof}
If we let
\[
\nu_n(A) := \mu_n(A\cap S) \quad \text{for Borel $A\subset T$,}
\]
then $(\nu_n)_{n\in\N}$ is a tight sequence of probability measures on $T$.
By the assumption, $\nu_{n_k}\tto \nu$ as $k\to\infty$ for some
probability measure $\nu$ on $T$ and $n_1<n_2<\cdots$.

Given $m\in\N$, there is some compact $K_m\subset S$ such that
\[
\nu_{n_k}(K_m) = \mu_{n_k}(K_m) \ge 1-1/m \qquad \text{for all $k\in\N$}.
\]
Let $E:=\bigcup_{m\in\N} K_m$.
As each $K_m$ is closed in $T$, we have
\[
\nu(K_m) \ge \limsup_{k\to\infty} \nu_{n_k}(K_m) \ge 1 - 1/m
\qquad \text{for all $m\in\N$}
\]
by the portmanteau theorem \cite[Theorem 2.1]{Bil99}.
So, $\nu(E)=1$.

Define a probability measure $\mu$ on $S$ by
\[
\mu(A):=\nu(A\cap E).
\]
Assume that $C\subset S$ is closed in $S$.
If $D\subset T$ is a closed set satisfying $D \cap S = C$, then
\[
\begin{split}
\limsup_{k\to\infty} \mu_{n_k}(C) &= \limsup_{k\to\infty} \nu_{n_k}(D)
\le \nu(D) \\
&= \nu(D\cap E) = \nu(C \cap E) = \mu(C).
\end{split}
\]
Thus, $\mu_{n_k} \tto \mu$ as $k\to\infty$ by the portmanteau theorem.
\end{proof}

The original paper by Prokhorov \cite[Theorem 1.12]{Pro56} shows
Theorem \ref{thm:prokhorov} when $S$ is a complete and separable metric space,
by first developing the theory of the Prokhorov metric on the space
of probability measures on $S$.
Since $[0,1]^\N$ is separable and complete under some metric generating
the product topology, we can again derive Theorem \ref{thm:prokhorov}
from Prokhorov's result using Lemma \ref{lem:reduction}.

Below we provide a proof of Theorem \ref{thm:prokhorov}
that works specifically when $S = \R^\N$.
Since any separable metric space can be embedded in $\R^\N$
(because $[0,1]^\N$ is a subspace of $\R^\N$),
Theorem \ref{thm:prokhorov} for arbitrary $S$
would then follow as above.

We use Theorem \ref{thm:kolmogorov_extension} in our proof.
However, as one can see from Section \ref{sec:kolmogorov_extension},
proving Theorem \ref{thm:kolmogorov_extension} takes less effort than proving
the Riesz-Markov-Kakutani representation theorem or developing the theory of
the Prokhorov metric.
Also, Theorem \ref{thm:kolmogorov_extension} can be naturally introduced
in a course in probability theory.
(Note that the Prokhorov metric is covered in a starred section
in \cite{Bil99}.)
When $\mu$ is a measure on $S_1$ and $\pi\colon S_1\to S_2$ is a measurable
function, we denote the measure $A \mapsto \mu(\pi^{-1}(A))$ on $S_2$
by $\mu\pi^{-1}$.

%Compared to the Riesz-Markov-Kakutani representation theorem or
%the Prokhorov metric, Theorem \ref{thm:kolmogorov_extension} also
%fits better in a course in probability theory.
%For example, 
%for example, the Prokhorov metric is contained in a starred section
%in \cite{Bil99}.

% ingredient is the following finite-dimensional version of
%Prokhorov's theorem, which can be proved just as in the one-dimensional case
%by applying a diagonalization argument to distribution functions.

%whose standard proof we contain for completeness.
%By $(x_1,\ldots,x_d) < (y_1,\ldots,y_d)$ we mean $x_i < y_i$ for all
%$i=1,\ldots,d$.

%\begin{proof}
%Let $(\mu_n)_{n \in \N}$ be a tight sequence of probability measures on $\R^d$.
%For each $n \in \N$, let $F_n$ denote the distribution function of $\mu_n$.
%By a diagonalization argument, we can choose $n_1 < n_2 < \cdots$ in $\N$
%such that $F_{n_k}(x)$ converges as $k \to \infty$ for all $x \in \Q^d$.
%Let $G(x)$ be the limit of $F_{n_k}(x)$ as $k \to \infty$.
%Define $F \colon \R^d \to [0,1]$ by
%\[
%F(x) := \inf\{\,G(y) \mid y \in \Q^d, y > x\,\}.
%\]
%Using the tightness of $(\mu_n)_{n \in \N}$, it is not difficult to see that
%$F$ is a distribution function.
%It is also not difficult to see that $F_{n_k}(x) \to F(x)$ as $k \to \infty$
%whenever $F$ is left-continuous at $x$, i.e. whenever for each $\eps>0$
%there is a $y<x$ such that $y<z<x$ implies $|F(z)-F(x)|\le\eps$.
%If $\mu$ is the probability measure on $\R^d$ whose distribution function is $F$,
%then $\mu_{n_k} \tto \mu$ as $k \to \infty$.
%\end{proof}

\begin{theorem} \label{thm:prokhorov_sequence}
Any tight sequence of probability measures on $\R^\N$ has a
weakly convergent subsequence.
\end{theorem}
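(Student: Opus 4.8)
The plan is to realize each probability measure $\mu_n$ on $\R^\N$ through its finite-dimensional marginals and then use Theorem \ref{thm:prokhorov_euclidean} together with a diagonal argument to extract a subsequence along which all finite-dimensional marginals converge; Theorem \ref{thm:kolmogorov_extension} then assembles the limiting marginals into an honest probability measure on $\R^\N$, and finally one checks that this candidate is indeed the weak limit of the chosen subsequence. Concretely, for each $k\in\N$ let $p_k\colon\R^\N\to\R^k$ be the projection onto the first $k$ coordinates, so that $\mu_n p_k^{-1}$ is a Borel probability measure on $\R^k$ for each $n$.

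The first step is to show that for each fixed $k$ the sequence $(\mu_n p_k^{-1})_{n\in\N}$ is tight on $\R^k$. This should follow directly from tightness of $(\mu_n)$ on $\R^\N$: given $\eps>0$, pick a compact $K\subset\R^\N$ with $\mu_n(K)\ge 1-\eps$ for all $n$; then $p_k(K)$ is compact in $\R^k$ and $\mu_n p_k^{-1}(p_k(K)) = \mu_n(p_k^{-1}(p_k(K)))\ge\mu_n(K)\ge 1-\eps$. By Theorem \ref{thm:prokhorov_euclidean} and a standard diagonalization over $k$, I can pass to a single subsequence $(n_j)_{j\in\N}$ such that $\mu_{n_j}p_k^{-1}\tto\nu_k$ as $j\to\infty$ for every $k\in\N$, where each $\nu_k$ is a Borel probability measure on $\R^k$. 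The family $(\nu_k)$ is automatically consistent in the sense of Theorem \ref{thm:kolmogorov_extension}: the marginal of $\nu_{k+1}$ onto the first $k$ coordinates is the weak limit of the corresponding marginals of $\mu_{n_j}p_{k+1}^{-1}$, which is exactly $\nu_k$, because the coordinate projection $\R^{k+1}\to\R^k$ is continuous and hence preserves weak convergence. Theorem \ref{thm:kolmogorov_extension} therefore produces a Borel probability measure $\mu$ on $\R^\N$ with $\mu p_k^{-1}=\nu_k$ for all $k$.

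It remains to verify that $\mu_{n_j}\tto\mu$. The main obstacle is precisely this last step: convergence of all finite-dimensional marginals does not, in general, imply weak convergence on an infinite-dimensional space without some additional uniform control, and here that control must come from tightness. The clean way to organize it is: first observe that $(\mu_{n_j})$ is tight, so by whichever portmanteau-type reasoning is convenient it suffices to show that every weakly convergent sub-subsequence has limit $\mu$. So suppose $\mu_{n_{j_i}}\tto\rho$ for some probability measure $\rho$ on $\R^\N$ (such a sub-subsequence exists by tightness together with the already-constructed $\mu$, or one can argue via a metric on $\R^\N$ making closed balls' complements open); since each $p_k$ is continuous, $\rho p_k^{-1}$ is the weak limit of $\mu_{n_{j_i}}p_k^{-1}$, which is $\nu_k=\mu p_k^{-1}$. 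Thus $\rho$ and $\mu$ have the same finite-dimensional marginals, and since the cylinder sets form a $\pi$-system generating the Borel $\sigma$-field of $\R^\N$, Dynkin's $\pi$-$\lambda$ theorem forces $\rho=\mu$. Therefore every sub-subsequential weak limit equals $\mu$, and combined with tightness this yields $\mu_{n_j}\tto\mu$, completing the proof. The one subtlety I would be careful about is ensuring that the existence of \emph{some} sub-subsequential limit is genuinely available before invoking $\mu$; this is where tightness of $(\mu_{n_j})$ is used, so I would state that reduction explicitly rather than treating it as automatic.
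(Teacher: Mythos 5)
Your construction of the candidate limit is exactly the paper's: tightness of the marginals, diagonalization via Theorem \ref{thm:prokhorov_euclidean}, consistency of the $\nu_k$ by continuity of the coordinate projections, and Theorem \ref{thm:kolmogorov_extension} to produce $\mu$ with $\mu p_k^{-1}=\nu_k$. All of that is correct. The gap is in the final step, and it is the one you yourself flagged but did not actually close. The reduction ``it suffices to show that every weakly convergent sub-subsequence has limit $\mu$'' is only valid if you already know that every subsequence of $(\mu_{n_j})$ \emph{has} a weakly convergent sub-subsequence; otherwise the hypothesis can hold vacuously while $\mu_{n_j}\not\tto\mu$. But ``every tight sequence on $\R^\N$ has a weakly convergent subsequence'' is precisely Theorem \ref{thm:prokhorov_sequence}, the statement being proved. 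Saying ``this is where tightness of $(\mu_{n_j})$ is used'' therefore begs the question: tightness yields relative compactness only via Prokhorov's theorem. The parenthetical alternatives (``together with the already-constructed $\mu$,'' ``a metric on $\R^\N$ making closed balls' complements open'') do not supply the missing compactness either. So as written, the argument that $\mu$ is actually the weak limit is circular.

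The fix is to prove weak convergence directly from the finite-dimensional convergence, which is what the paper does: every open $G\subset\R^\N$ is a countable union of open cylinder sets, so for any $\eps>0$ there are $M$ and an open $H\subset\R^M$ with $p_M^{-1}(H)\subset G$ and $\mu(p_M^{-1}(H))\ge\mu(G)-\eps$. Then
\[
\mu(G)-\eps \le \mu p_M^{-1}(H) \le \liminf_{j\to\infty}\mu_{n_j}p_M^{-1}(H)
\le \liminf_{j\to\infty}\mu_{n_j}(G),
\]
using only the already-established weak convergence $\mu_{n_j}p_M^{-1}\tto\nu_M=\mu p_M^{-1}$ and the portmanteau inequality for the open set $H$ in $\R^M$. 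Letting $\eps\to0$ gives $\liminf_j\mu_{n_j}(G)\ge\mu(G)$ for all open $G$, and the portmanteau theorem concludes. Note that this step uses nothing beyond the topology of $\R^\N$ (second countability with a cylinder base); your worry that ``additional uniform control must come from tightness'' is misplaced here --- tightness has already done its work in producing the $\nu_k$ as genuine probability measures, so that $\mu$ has total mass $1$ and no mass escapes.
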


\begin{proof}
Let $(\mu_n)_{n \in \N}$ be a given tight sequence of probability measures
on $\R^\N$.
For each $N \in \N$, let $\pi_N\colon \R^\N \to \R^N$ denote the projection
to the first $N$ coordinates.
Since $\mu_n(K) \ge 1-\eps$ implies
\[
\mu_n\pi_N^{-1}(\pi_N(K)) \ge 1-\eps,
\]
the sequence $(\mu_n\pi_N^{-1})_{n \in \N}$ is tight for all $N \in \N$.
Thus, by Theorem \ref{thm:prokhorov_euclidean} and a diagonization argument,
we have $n_1 < n_2 < \cdots$ such that $\mu_{n_k}\pi_N^{-1} \tto \nu_N$
as $k \to \infty$ (for some $\nu_N$) for all $N \in \N$.

Let $\pi_{N+1,N}\colon \R^{N+1} \to \R^N$ be the projection to the first
$N$ coordinates.
Since $\mu_{n_k}\pi_{N+1}^{-1} \tto \nu_{N+1}$ as $k \to \infty$, we have
\[
\mu_{n_k}\pi_N^{-1} = (\mu_{n_k}\pi_{N+1}^{-1})\pi_{N+1,N}^{-1}
\tto \nu_{N+1}\pi_{N+1,N}^{-1}
\]
as $k \to \infty$ by the definition of weak convergence
(or the continuous mapping theorem \cite[Theorem 2.7]{Bil99}).
This shows $\nu_{N+1}\pi_{N+1,N}^{-1} = \nu_N$ for all $N \in \N$.
Thus, Theorem \ref{thm:kolmogorov_extension} implies the existence
of a probability measure $\nu$ on $\R^\N$ satisfying
$\nu\pi_N^{-1} = \nu_N$ for all $N \in \N$.

To show $\mu_{n_k} \tto \nu$ as $k \to \infty$, let $G \subset \R^\N$
be a given open set.
Since any open subset of $\R^\N$ is a countable union of sets of the form
$\pi_M^{-1}(H)$ where $M\in\N$ and $H\subset\R^M$ is open, for any $\eps > 0$
there is some $M \in \N$ and an open set $H \subset \R^M$ such that
\[
\pi_M^{-1}(H) \subset G\quad\text{and}\quad\nu(\pi_M^{-1}(H)) \ge \nu(G) - \eps.
\]
Since $\mu_{n_k}\pi_M^{-1} \tto \nu\pi_M^{-1}$ as $k \to \infty$, we have
\[
\nu(G)-\eps \le \nu\pi_M^{-1}(H) \le \liminf_{k\to\infty}\mu_{n_k}\pi_M^{-1}(H)
\le \liminf_{k\to\infty}\mu_{n_k}(G).
\]
Since $G$ and $\eps$ were arbitrary, we can conclude that
$\mu_{n_k} \tto \nu$ as $k \to \infty$
by the portmanteau theorem.
\end{proof}

% References
\bibliographystyle{alpha}
\bibliography{wooyoung}
\end{document}